\numberwithin{equation}{section}
\newtheorem{theorem}{Theorem}[section]
\newtheorem{remark}[theorem]{Remark}
\theoremstyle{definition}
\newcommand{\e}{\end{document}}
\begin{document}

\thispagestyle{empty}
\author{
{{\bf Beih S. El-Desouky$^1$, Abdelfattah Mustafa$^1$ and Nenad P. Caki\'{c}$^2$}
\newline{\it{{}  }}
 } { }\vspace{.2cm}\\
 \small \it $^1$Department of Mathematics, Faculty of Science, Mansoura University, Mansoura 35516, Egypt.\\
 \small \it $^2$Department of Mathematics, Faculty of Electrical Engineering, University of Belgrade, Serbia.
}

\title{New Results on Higher-Order Changhee Numbers and Polynomials}

\date{}

\maketitle
\small \pagestyle{myheadings}
        \markboth{{\scriptsize New Results on Higher-Order Changhee Numbers and Polynomials}}
        {{\scriptsize {Beih S. El-Desouky, Abdelfattah Mustafa and Nenad P. Caki\'{c}}}}

\hrule \vskip 8pt

\begin{abstract}
We derive new matrix representation for higher-order changhee numbers and polynomials. This helps us to obtain simple and short proofs of many previous results on higher-order changhee numbers and polynomials. Moreover, we obtain recurrence relations, explicit formulas and some new results for these numbers and polynomials. Furthermore, we investigate the relations between these numbers and polynomials and Stirling numbers, N\"{o}rlund and Bernoulli numbers of higher-order. Some numerical results using Mathcad program are introduced.
\end{abstract}

\noindent
{\bf Keywords:}
{\it Higher-order Changhee Polynomials, Bernoulli Polynomials,  Euler Polynomials, Matrix Representation.}

\noindent
{\it {\bf 2010 MSC:}}
{\em  05A19, 11C20, 11B73, 11T06}


\section{Introduction}
For $\alpha \in \mathbb{N}$,  the Bernoulli polynomials of order $\alpha$ are defined by, see \cite{Carlitz1961}-\cite{Ozdenetal2009}
\begin{equation}\label{eq:1.1}
\left(\frac{t}{e^t-1}\right)^{\alpha} e^{xt}= \sum_{n=0}^\infty B_n^{(\alpha) } (x) \frac{  t^n}{n!}.
\end{equation}

\noindent
When $x=0,B_n^{(\alpha) }=B_n^{(\alpha)} (0)$ are the Bernoulli numbers of order $\alpha$, defined by
\begin{equation}\label{eq:1.2}
\left( \frac{t}{e^t-1} \right)^{\alpha}=\sum_{n=0}^\infty B_n^{(\alpha) }  \frac{ t^n}{n!}.
\end{equation}

\noindent
The Euler polynomial of order $\alpha (\in \mathbb{N})$ are defined by the generating function to be
\begin{equation} \label{eq:1.3}
\left(\frac{2}{e^t+1} \right)^{\alpha} e^{xt} =\sum_{n=0}^\infty E_n^{(\alpha)} (x) \frac{t^n}{n!},
\end{equation}
see \cite{BayadandKim2010,AraciandAcikgoz2012}, when $x = 0,E_n^{(\alpha)} = E_n^{(\alpha)}(0)$ are called the Euler numbers of order $\alpha$.

\noindent
Luo \cite[Eq. 2.8]{Luo2009}, introduced the following explicit formula for the Euler polynomial with order $\alpha$,
\begin{equation} \label{1.4}
E_n^{(\alpha)} (x) = \sum_{i=0}^n \binom{n}{i} x^{n-i} \sum_{j=0}^i \left(\frac{-1}{2}\right)^j \binom{\alpha+j-1}{j} s_2(i,j),
\end{equation}
where $s_2(i,j)$, are the Stirling numbers of the second kind.

\noindent
We can represent Euler polynomials of order $\alpha$,  by $(n+1)\times(\alpha+1)$ matrix , $0\leq \alpha \leq n$, as follows
{\scriptsize
\begin{equation} \label{EPH}
{\textbf {E} }^{(\alpha)} (x)=
\left(
\begin{array}{ccccc}
E_0^{(0)}(x) & E_0^{(1)} (x) & E_0^{(2)} (x) & \cdots & E_0^{(\alpha)} (x) \\
E_1^{(0)} (x) & E_1^{(1)} (x) & E_1^{(2)} (x) & \cdots & E_1^{(\alpha)} (x) \\
\vdots    & \vdots    & \vdots    & \ddots & \vdots    \\
E_n^{(0) } (x)& E_n^{(1) } (x) & E_n^{(2) } (x) & \cdots & E_n^{(\alpha) } (x)
\end{array}
\right).
\end{equation}
}

\noindent
For example, if setting  $0\leq n \leq 4, \; 0 \leq \alpha \leq n$ , in (\ref{EPH}), we have the following Euler polynomial, for $n, \alpha=0, 1,\cdots, 4.$
{\scriptsize
\[
{\textbf {E}}^{(\alpha)} (x)=
\left(
\begin{array}{ccccc}
0	& 1	& 1	& 1	& 1 \\
0	& x-\frac{1}{2}	& x-1	& x-\frac{3}{2}	& x-2\\
0	& x^2-x	& x^2-2x+\frac{1}{2}	& x^2-3x+\frac{3}{2}	& x^2-4x+3\\
0	& x^3-\frac{3x^2}{2}+\frac{1}{4}	& x^3-3x^2+\frac{3x}{2}+\frac{1}{2}	& x^3-\frac{9x^2}{2}+\frac{9x}{2}	& x^3-6x^2+9x-2\\
0	& x^4-2x^3+x	& x^4-4x^3+3x^2+2x-1	& x^4-6x^3+9x^2-3	& x^4-8x^3+18x^2-8x-\frac{9}{2}\\
\end{array}
\right).
\]
}

\noindent
Kim et al. \cite{Kimetal2014C} define the Changhee polynomials of the first kind with order $k$ by the generating function as follows.
\begin{equation} \label{eq:1.5}
\left(\frac{2}{2+t} \right)^k (1+t)^x = \sum_{n=0}^\infty Ch_n^{(k)} (x) \frac{t^n}{n!}.
\end{equation}

\noindent
If $x=0$, hence  $Ch_n^{(k)}=Ch_n^{(k)} (0)$ are called the Changhee numbers of the first kind with order $k$.
For $k=1$, $Ch_n (x)=Ch_n^{(1)} (x)$ are called the Changhee polynomial of the first kind with order $1$, that is defined as, see \cite{Kimetal2013, Kim2004}.
\begin{equation} \label{eq:1.6}
\left( \frac{2}{t+2} \right) (1+t)^x = \sum_{n=0}^\infty Ch_n (x) \frac{t^n}{n!}.
\end{equation}

\noindent
Moreover, Kim et al. \cite{Kimetal2014D} defined the Daehee polynomials of order $k$ by the generating function as follows
\begin{equation} \label{eq:1.7}
\sum_{n=0}^\infty D_n^{(k) }  (x)  \frac{t^n}{n!}=\left(\frac{\log(1+t)}{t}\right)^k (1+t)^x.
\end{equation}

\noindent
In the special case, $x=0,D_n^{(k)}=D_n^{(k)} (0)$ are called the Daehee numbers of order $k$, defined by
\begin{equation} \label{eq:1.8}
\left( \frac{\log(1+t)}{t} \right)^k=\sum_{n=0}^\infty D_n^{(k)}  \frac{ t^n}{n!}.
\end{equation}

\noindent
When $k=1,\; D_n(x)=D_n^{(1)} (x)$ are the Daehee polynomials of the first kind with order 1, defined by
\begin{equation} \label{eq:1.9}
\left( \frac{\log(1+t)}{t} \right) (1+t)^x= \sum_{n=0}^\infty D_n (x) \frac{t^n}{n!},
\end{equation}
see \cite{KimKim2013, KimSimsek2008} and  \cite{Ozdenetal2009}.

\noindent
The Stirling numbers of the first and second kind are defined, respectively, by
\begin{equation} \label{eq:1.10}
(x)_n = \prod_{i=0}^{n-1} (x-i) = \sum_{l=0}^n s_1 (n,l) x^l,
\end{equation}

\noindent
where $s_1(n,0)=\delta_{n,0}, \; s_1(n,k)=0, \; \mbox{for} \; k>n,$ and
\begin{equation} \label{eq:1.11}
x^n =  \sum_{k=0}^n s_2 (n,k) (x)_k,
\end{equation}

\noindent
where $s_2(n,0)=\delta_{n,0}, \; s_2(n,k)=0, \; \mbox{for} \; k>n$, and $\delta_{n,k}$ is the kronecker delta.

\noindent
The Stirling numbers of the second kind  have the generating function, see  \cite{Carlitz1961,Comtet1974,El-Desouky1994,El-Desoukyetal2010} and \cite{Gould1972}
\begin{equation} \label{eq:1.12}
\left(e^t-1 \right)^m = m! \sum_{l=m}^\infty s_2 (l,m) \frac{  t^l}{l!}.
\end{equation}

\section{Higher-order Changhee Polynomials of The First Kind}
In this section, we derive an explicit formulas and recurrence relations for the higher-order Changee numbers and polynomials of the first kind. Also the relation between these numbers and N\"{o}rlund numbers are given. Furthermore, we introduce the matrix representation of some results for higher-order Changhee numbers and polynomial obtained by Kim et al. \cite{Kimetal2014C} in terms of Stirling numbers, N\"{o}rlund numbers, Euler and Bernoulli numbers of higher-order and investigate a simple and short proofs of these results.

\noindent
Kim et al. \cite[ Eq. 2.8]{Kimetal2014C} proved that, for $n\in \mathbb{Z}, \; k\in \mathbb{N}$, the Chaghee higher order numbers of the first kind can be obtained as
\begin{eqnarray} \label{eq:2.1}
2^n Ch_n^{(k) }&=& (-1)^n (k+n-1)_n
\nonumber\\
& = & (-1)^n \sum_{\ell=0}^{n} s_1(n,l) (k+n-1)^{\ell}.
\end{eqnarray}

\noindent
If $k=1$, $Ch_n=(-1)^n {n! \over 2^n}.$

\noindent
We can represent the Changee numbers of the first kind of order $k$,  by $(n+1)\times(k+1)$ matrix , $0\leq k \leq n$, as follows
{\scriptsize
\[
{\textbf {Ch} }^{(k) }=
\left(
\begin{array}{ccccc}
Ch_0^{(0)} & Ch_0^{(1)} & Ch_0^{(2)} & \cdots & Ch_0^{(k)} \\
Ch_1^{(0)} & Ch_1^{(1)} & Ch_1^{(2)} & \cdots & Ch_1^{(k)} \\
\vdots    & \vdots    & \vdots    & \ddots & \vdots    \\
Ch_n^{(0) }& Ch_n^{(1) }& Ch_n^{(2) } & \cdots & Ch_n^{(k) }
\end{array}
\right).
\]
}

\noindent
For example, if setting  $0\leq n \leq 4, \; 0 \leq k \leq n$ , in (\ref{eq:2.1}), we have the following higher order Changhee numbers of the first kind, for $n,k=0, 1,\cdots, 4.$
{\scriptsize
\[
{\textbf {Ch}}^{(k)}=
\left(
\begin{array}{ccccc}

0	& 1	    &  1	& 1	& 1 \\
0	& -1/2	& -1	& -3/2	&-2 \\
0	& 1/2	& 3/2	& 3	& 5 \\
0	& -3/4	& -3	& -15/2	& -15 \\
0	& 3/2	& 15/2	& 45/2	& 105/2
\end{array}
\right).
\]
}

\noindent
Kim et al. \cite[Theorem 1]{Kimetal2014D}, derived the following relation
\begin{equation} \label{eq:2.2}
D_n^{(k)}= \frac{s_1(n+k,k)}{\binom{n+k}{k}}.
\end{equation}

\noindent
Substituting from Eq. (\ref{eq:2.2}) into Eq. (\ref{eq:2.1}), we obtain the relation between  higher order Changhee numbers and higher order Dahee numbers of the first kind as follows
\begin{equation} \label{eq:2.3}
2^n Ch_n^{(k)}=(-1)^n \sum_{\ell=0}^{n} \binom{n}{\ell} n^{\ell} D_{n-\ell}^{(\ell)}.
\end{equation}

\noindent
Setting  $k=1$ into Eq. (\ref{eq:2.1}), we get the explicit form for the Changhee number of the first kind as follows. 

\begin{equation}\label{eq:2.4}
Ch_n=\frac{(-1)^n n!}{2^n}.
\end{equation}

\noindent
El-Desouky and Mustafa \cite[Eq. 6]{El-DesoukyMustafa2014}, derived the explicit form for the Daehee numbers of the first kind as follows
$$
D_n=(-1)^n \frac{n!}{n+1},
$$

\noindent
then the relation between Daehee numbers and Changhee numbers can be obtained as

\begin{equation} \label{eq:2.5}
D_n=\frac{2^n}{n+1} Ch_n.
\end{equation}

\noindent
By using the recurrence relation for the Daehee numbers of the first kind, \cite{El-DesoukyMustafa2014},  we can derive the recurrence relation for the Changhee numbers of the first kind as follows
\begin{equation} \label{eq:2.6}
2 Ch_n+n \, Ch_{n-1}=0.
\end{equation}

\noindent
The N\"{o}rlund numbers of the second kind have the explicit formula \cite[Remark 4]{LiuSrivastava2006},
\begin{equation} \label{eq:2.7}
b_n^{(-1)}={(-1)^n \over {n+1}}.
\end{equation}

\noindent
From Eq. (\ref{eq:2.4}) and Eq.(\ref{eq:2.7}), we can obtain the relation between the Changhee numbers and the N\"{o}rlund numbers as follows
\begin{equation} \label{eq:2.8}
Ch_n={(n+1)! \over 2^n} \; b_n^{(-1)}.
\end{equation}

\noindent
Kim et al. \cite[Theorem 2.2]{Kimetal2014C},  proved the following result. For $n \geq 0 $,
\begin{equation}\label{eq:2.9}
Ch_n^{(k)}=\sum_{\ell=0}^n s_1 (n, \ell) E_{\ell}^{(k)}.
\end{equation}

\noindent
\begin{remark}
We can write this relation in the matrix form as follows.
\begin{equation}\label{eq:2.10}
{\bf Ch}^{(k)}= {\bf S}_1 \, {\bf E}^{(k)},
\end{equation}
\end{remark}

\noindent
where ${\bf Ch}^{(k) }$  is $(n+1)\times (k+1), \, 0\leq k\leq n,$ matrix for the Changhee numbers of the first kind of order $k$ and  ${\bf S}_1$ is  $(n+1)\times(k+1)$ lower triangular matrix for the Strirling numbers of the first  kind and ${\bf E}^{(k) }$ is $(n+1)\times(k+1), \, 0\leq k \leq n,$ matrix for the Euler numbers of order $k$.\\
For example, if setting  $0\leq n \leq 4, \; 0 \leq k \leq n$ , in (\ref{eq:2.10}), we have
{\scriptsize
\[
\left(
\begin{array}{ccccc}
1	& 0	 & 0	& 0	 & 0\\
0	& 1	 & 0	& 0	 & 0\\
0	& -1 & 1	& 0	 & 0\\
0	& 2	 & -3	& 1	 & 0\\
0	& -6 & 11	& -6 & 1\\
\end{array}
\right)\left(
\begin{array}{ccccc}
0	& 1	    &  1	& 1	   & 1\\
0	& -1/2	& -1	& -3/2 & -2\\
0	& 0	    & 1/2	& 3/2  & 3\\
0	& 1/4	& 1/2	& 0	  & -2\\
0	& 0	    & -1	& -3  &-9/2\\
\end{array}
\right)=\left(
\begin{array}{ccccc}
0	& 1	    & 1	    & 1	    & 1\\
0	& -1/2	& -1	& -3/2	& -2\\
0	& 1/2	& 3/2	& 3	    & 5\\
0	& -3/4	& -3	& -15/2	& -15\\
0	& 3/2	& 15/2	& 45/2	& 105/2\\
\end{array}
\right).
\]
}

\noindent
Kim et al. \cite[Theorem 2.3]{Kimetal2014C} proved the following result. For $n \geq 0$, we have
\begin{equation} \label{eq:2.11}
E_m^{(k)}= \sum_{n=0}^m  s_2 (n,m) Ch_n^{(k)}.
\end{equation}

\noindent
We can write this relation in the matrix form as follows
\begin{equation} \label{eq:2.12}
{\bf E}^{(k)}={\bf S}_2 \, {\bf Ch}^{(k)},
\end{equation}

\noindent
where  ${\bf S}_2$ is $(n+1)\times(k+1)$ lower triangular matrix for the Strirling numbers of the second kind.

\noindent
For example, if setting  $0\leq n \leq 4, \; 0 \leq k \leq n$ , in (\ref{eq:2.12}), we have

{\scriptsize
\[
\left(
\begin{array}{rrrrr}
1	& 0	& 0	& 0	& 0\\
0	& 1	& 0	& 0	& 0\\
0	& 1	& 1	& 0	& 0\\
0	& 1	& 3	& 1	& 0\\
0	& 1	& 7	& 6	& 1\\
\end{array}
\right)\left(
\begin{array}{rrrrr}
0	& 1	& 1	& 1	& 1\\
0	& -\frac{1}{2}	& -1	& -\frac{3}{2}	& -2\\
0	& \frac{1}{2}	& \frac{3}{2}	& 3	& 5\\
0	& -\frac{3}{4}	& -3	& -\frac{15}{2}	& -15\\
0	& \frac{3}{2}	& \frac{15}{2}	& \frac{45}{2}	& \frac{105}{2}\\
\end{array}
\right)=
\left(
\begin{array}{rrrrr}
0	& 1	& 1	& 1	& 1\\
0	& -\frac{1}{2}	& -1	& -\frac{3}{2}	& -2\\
0	& 0	& \frac{1}{2}	& \frac{3}{2}	& 3\\
0	& \frac{1}{4}	& \frac{1}{2}	& 0	& -2\\
0	& 0	& -1	& -3	& -\frac{9}{2}\\
\end{array}
\right).
\]
}

\begin{remark}
Using the matrix form (\ref{eq:2.10}), we easily can derive a short proof of  \cite[Theorem 2.3]{Kimetal2014C}. Multiplying both sides by the Striling number of second kind as follows.
\[
{\bf S}_2 \, {\bf Ch}^{(k) }= {\bf S}_2 \, {\bf S}_1 \,  {\bf E}^{(k)}={\bf I\, E}^{(k)}={\bf E}^{(k) },
\]

\noindent
where {\bf I} is the identity matrix of order $(n+1)$.
\end{remark}

\noindent
Kim et al. \cite{Kimetal2013} introduced the following definition for the the Changhee Polynomial of the first kind.
\begin{equation} \label{eq:2.13}
\sum_{n=0}^\infty
Ch_n(x) {t^n \over n!} =\left({2 \over {2+t}} \right) (1+t)^x.
\end{equation}

\begin{theorem}
For $n  \in \mathbb{N}$, the Changhee polynomials satisfy the following relation
\begin{equation} \label{eq:2.14}
Ch_n(x) =\sum_{i=0}^n (-1)^i  {n! \over 2^i}  \binom{x}{n-i}.
\end{equation}	
\end{theorem}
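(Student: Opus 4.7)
The plan is to expand both factors of the generating function in (\ref{eq:2.13}) as formal power series in $t$ and then read off the coefficient of $t^n/n!$ by taking the Cauchy product.

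First I would rewrite the rational factor as a geometric series:
\[
\frac{2}{2+t} = \frac{1}{1+t/2} = \sum_{i=0}^{\infty} (-1)^i \frac{t^i}{2^i}.
\]
Next I would expand the binomial factor using the generalized binomial series,
\[
(1+t)^x = \sum_{j=0}^{\infty} \binom{x}{j}\, t^j,
\]
which is valid as a formal identity for indeterminate $x$ (with $\binom{x}{j} = x(x-1)\cdots(x-j+1)/j!$).

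Then I would multiply these two series and collect the coefficient of $t^n$. By the Cauchy product, setting $j = n-i$,
\[
\frac{2}{2+t}\,(1+t)^x = \sum_{n=0}^{\infty} \left( \sum_{i=0}^{n} \frac{(-1)^i}{2^i}\binom{x}{n-i} \right) t^n.
\]
Comparing this with (\ref{eq:2.13}), dividing coefficients by $1/n!$ on both sides gives
\[
Ch_n(x) = n!\sum_{i=0}^{n}\frac{(-1)^i}{2^i}\binom{x}{n-i} = \sum_{i=0}^{n}(-1)^i\,\frac{n!}{2^i}\binom{x}{n-i},
\]
which is the claimed identity.

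There is no real obstacle here; the only small point to be careful about is the index bookkeeping in the Cauchy product (ensuring the upper limit is $n$ so that $\binom{x}{n-i}$ is well-defined for $i \le n$) and the fact that one is working with formal power series, so convergence issues in $t$ do not arise. The argument is entirely a generating-function manipulation and requires none of the deeper machinery (Stirling numbers, matrix identities) developed earlier in the section.
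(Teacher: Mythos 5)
Your proof is correct and follows essentially the same route as the paper's: both expand $\left(1+\frac{t}{2}\right)^{-1}$ as a geometric series and $(1+t)^x$ by the generalized binomial series, take the Cauchy product, and equate coefficients of $t^n/n!$ against the defining generating function (\ref{eq:2.13}). Your index bookkeeping (substituting $j=n-i$ with upper limit $n$) matches the paper's reindexing step, so there is nothing to add.
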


\begin{proof}
From Eq. (\ref{eq:2.13}), we have	
\begin{eqnarray} \label{eq:2.15}
\sum_{n=0}^\infty
Ch_n(x) {t^n \over n!} &=&\left(1+{t \over 2} \right)^{-1} (1+t)^x
\nonumber\\
&= &
\sum_{i=0}^\infty (-1)^i \left( {t \over 2} \right)^i \sum_{j=0}^\infty \binom{x}{j} t^j 
\nonumber\\
&= &
\sum_{i=0}^\infty \sum_{j=0}^\infty (-1)^i  {1 \over 2^i}  \binom{x}{j}   t^{j+i} 
\nonumber\\
&= &
\sum_{i=0}^\infty \sum_{n=i}^\infty (-1)^i  {n! \over 2^i}  \binom{x}{n-i}   {t^n \over n!} 
\nonumber\\
&= &
\sum_{n=0}^\infty \sum_{i=0}^n (-1)^i  {n! \over 2^i}  \binom{x}{n-i}   {t^n \over n!} ,
\end{eqnarray}

\noindent
by equating the coefficient of $t^n$, in both sides of Eq.(\ref{eq:2.15}), we have
\begin{equation*}
Ch_n(x) =\sum_{i=0}^n (-1)^i  {n! \over 2^i}  \binom{x}{n-i}.
\end{equation*}

\noindent
This completes  the proof.
	
\end{proof}

\noindent
Kim et al. \cite{Kimetal2014C} introduced the following definition for the Changhee Polynomials of the first kind with order $k$ by
\begin{equation} \label{eq:2.16}
\sum_{n=0}^\infty
Ch_n^{(k)} (x) {t^n \over n!} =\left({2 \over {2+t}} \right)^k (1+t)^x.
\end{equation}

\begin{theorem}
For $n,k  \in \mathbb{N}$, the Changee polynomials of higher order satisfy the following relation
\begin{equation} \label{eq:2.17}
Ch_n^{(k)} (x) =\sum_{i=0}^n (-1)^i  {n! \over 2^i}  \binom{k+i-1}{i}\binom{x}{n-i}.
\end{equation}
\end{theorem}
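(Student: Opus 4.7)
The plan is to mirror exactly the argument used in Theorem 2.1, replacing the single factor $\left(1+t/2\right)^{-1}$ by its $k$-th power and invoking the generalized binomial series. Starting from the generating function (\ref{eq:2.16}), I would rewrite
\[
\sum_{n=0}^\infty Ch_n^{(k)}(x) \frac{t^n}{n!} = \left(1 + \frac{t}{2}\right)^{-k} (1+t)^x,
\]
and then expand each factor separately as a power series in $t$.

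For the first factor, I would apply the generalized binomial theorem together with the standard identity $\binom{-k}{i} = (-1)^i \binom{k+i-1}{i}$ to obtain
\[
\left(1 + \frac{t}{2}\right)^{-k} = \sum_{i=0}^\infty (-1)^i \binom{k+i-1}{i} \frac{t^i}{2^i}.
\]
For the second factor, I would use $(1+t)^x = \sum_{j=0}^\infty \binom{x}{j} t^j$. Multiplying the two series and collecting terms of total degree $n$ (via the substitution $j = n-i$), I would get
\[
\sum_{n=0}^\infty Ch_n^{(k)}(x) \frac{t^n}{n!} = \sum_{n=0}^\infty \left[\sum_{i=0}^n (-1)^i \binom{k+i-1}{i} \binom{x}{n-i}\right] t^n.
\]

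Finally, I would equate coefficients of $t^n$ on both sides and multiply by $n!$ to isolate $Ch_n^{(k)}(x)$, which produces exactly the claimed formula (\ref{eq:2.17}). The argument is essentially routine power-series manipulation, and the only step requiring any care is the sign/binomial identity $\binom{-k}{i} = (-1)^i \binom{k+i-1}{i}$; this is well known and is the only place where the hypothesis $k \in \mathbb{N}$ enters in a non-trivial way. Since the proof is a direct generalization of Theorem 2.1, I anticipate no real obstacle — the main thing is bookkeeping of the two summation indices when reindexing $i + j = n$.
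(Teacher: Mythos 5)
Your proposal follows essentially the same route as the paper's own proof: expand $\left(1+\frac{t}{2}\right)^{-k}$ by the negative binomial series using $\binom{-k}{i}=(-1)^i\binom{k+i-1}{i}$, expand $(1+t)^x=\sum_j\binom{x}{j}t^j$, form the Cauchy product with the reindexing $j=n-i$, and equate coefficients of $t^n$. Note only that your intermediate display drops the factor $\frac{1}{2^i}$ — the bracket should read $\sum_{i=0}^n (-1)^i\, 2^{-i}\binom{k+i-1}{i}\binom{x}{n-i}$ — but since your final formula restores the $\frac{n!}{2^i}$, this is clearly a transcription slip rather than a gap.
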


\begin{proof}
From Eq. (\ref{eq:2.16}), we have

\begin{eqnarray} \label{eq:2.18}
\sum_{n=0}^\infty
Ch_n^{(k)}(x) {t^n \over n!} &=&\left(1+{t \over 2} \right)^{-k} (1+t)^x
\nonumber\\
&= &
\sum_{i=0}^\infty (-1)^i \binom{k+i-1}{i} \left( {t \over 2} \right)^i \sum_{j=0}^\infty \binom{x}{j} t^j 
\nonumber\\
&= &
\sum_{i=0}^\infty \sum_{j=0}^\infty (-1)^i  {1 \over 2^i}  \binom{k+i-1}{i}\binom{x}{j}   t^{j+i} 
\nonumber\\
&= &
\sum_{i=0}^\infty \sum_{n=i}^\infty (-1)^i  {n! \over 2^i}  \binom{k+i-1}{i}\binom{x}{n-i}   {t^n \over n!} 
\nonumber\\
&= &
\sum_{n=0}^\infty \sum_{i=0}^n (-1)^i  {n! \over 2^i}  \binom{k+i-1}{i}\binom{x}{n-i}   {t^n \over n!} ,
\end{eqnarray}

\noindent
by equating the coefficient of $t^n$, on both sides of Eq.(\ref{eq:2.18}), we have
\begin{equation} \label{eq:2.19}
Ch_n^{(k)} (x) =\sum_{i=0}^n (-1)^i  {n! \over 2^i}  \binom{k+i-1}{i}\binom{x}{n-i}.
\end{equation}

\noindent
This completes the proof.

\end{proof}

\begin{remark}
Setting $k=1$ in Eq. (\ref{eq:2.18}), we get  Eq. (\ref{eq:2.15}).
\end{remark}

\noindent
Kim et al. \cite[Corollary 2.4]{Kimetal2014C} introduced the following result for the Changhee Polynomials of the first kind with oreder $k$.

\noindent 
For $n \geq 0$, we have
\begin{equation} \label{eq:2.20}
Ch_n^{(k)} (x) = \sum_{\ell=0}^n  s_1 (n,\ell) E_{\ell}^{(k)} (x).
\end{equation}

\noindent
We can write this relation in the matrix form as follows
\begin{equation} \label{eq:2.21}
{\bf Ch}^{(k)} (x) = {\bf S}_1 \,  {\bf E}^{(k)} (x),
\end{equation}

\noindent
where ${\bf Ch}^{(k) } (x)$ is $(n+1)\times (k+1)$ matrix for the Changhee polynomials of the first kind with order $k$ and  ${\bf E}^{(k) } (x)$ is $(n+1)\times(k+1)$ matrix for the Euler polynomials of order $k$.

\noindent
For example, if setting  $0 \leq n \leq 3, \; 0 \leq k \leq n$ , in (\ref{eq:2.21}), we have
{\scriptsize
\begin{eqnarray*}
\left(
\begin{array}{cccc}
1	& 0	& 0	& 0	\\
0	& 1	& 0	& 0	\\
0	& -1	& 1	& 0	\\
0	& 2	& -3	& 1	 \\
\end{array}
\right)
\left(
\begin{array}{cccc}
0	& 1	& 1	& 1	\\
0	& x-\frac{1}{2}	& x-1	& x-\frac{3}{2}	\\
0	& x^2-x	& x^2-2x+\frac{1}{2}	& x^2-3x+\frac{3}{2}\\
0	& x^3-\frac{3}{2}x^2+\frac{1}{4}	& x^3-3*x^2+\frac{3}{2}x+\frac{1}{2}	& x^3-\frac{9}{2}x^2+\frac{9}{2}x	\\
\end{array}
\right)=
\nonumber\\
\left(
\begin{array}{cccc}
0	& 1	& 1	& 1	\\
0	& x-\frac{1}{2}	& x-1	& x-\frac{3}{2}	\\
0	& x^2-2x+\frac{1}{2}	& x^2-3x+\frac{3}{2}	& x^2-4x+3	\\
0	& x^3-\frac{9}{2}x^2+5x-\frac{3}{4}	& x^3-6x^2+\frac{19}{2} x-3	& x^3-\frac{15}{2}x^2 + \frac{31}{2}x-\frac{15}{2}	\\
\end{array}
\right).
\end{eqnarray*}
}

\noindent
Kim et al. \cite[Theorem 2.5]{Kimetal2014C} introduced the following result. 
For $m \geq 0$, we have
\begin{equation} \label{eq:2.22}
E_m^{(k) } (x) = \sum_{n=0}^m Ch_n^{(k) } (x) s_2 (m,n).
\end{equation}

\noindent
We can write Eq. (\ref{eq:2.22}) in the matrix form as follows
\begin{equation}\label{eq:2.23}
{\bf E}^{(k) } (x)={\bf S}_2 \, {\bf Ch}^{(k) } (x).
\end{equation}

\noindent
For example, if setting  $0 \leq n \leq 3, \; 0 \leq k \leq n$ , in (\ref{eq:2.23}), we have
{\scriptsize
\begin{eqnarray*}
\left(
\begin{array}{cccc}
1 & 0 & 0 & 0 \\
0 & 1 & 0 & 0 \\
0 & 1 & 1 & 0 \\
0 & 1 & 3 & 1
\end{array}
\right)
\left(
\begin{array}{cccc}
0	& 1	& 1	& 1 \\
0	& x-\frac{1}{2}	& x-1	& x-\frac{3}{2} \\
0	& x^2-2x+\frac{1}{2}	& x^2-3x+\frac{3}{2}	& x^2-4x+3\\
0	& x^3-\frac{9}{2}x^2+5x-\frac{3}{4}	& x^3-6x^2+\frac{19}{2}x-3	& x^3-\frac{15}{2}x^2+\frac{31}{2}x-\frac{15}{2}
\end{array}
\right)=
\\
\left(
\begin{array}{cccc}
0	& 1	 & 1	&  1  \\
0	& x-\frac{1}{2}	& x-1	& x-\frac{3}{2}	\\
0	& x^2-x	& x^2-2x+\frac{1}{2}	& x^2-3x+\frac{3}{2}	\\
0	& x^3-\frac{3}{2}x^2+\frac{1}{4}	& x^3-3x^2+\frac{3}{2}x+\frac{1}{2}	& x^3-\frac{9}{2}x^2+\frac{9}{2}x
\end{array}
\right).
\end{eqnarray*}
}

\begin{remark} We can prove \cite[Theorem 2.5]{Kimetal2014C} by using the matrix form (\ref{eq:2.21}) as follows. Multiplying both sides of (\ref{eq:2.21}) by the Striling number of second kind, we have
\[
{\bf S}_2 \, {\bf Ch}^{(k)} (x) ={\bf S}_2 \, {\bf S}_1 \,  {\bf E}^{(k)} (x)={\bf I\, E}^{(k)} (x)={\bf E}^{(k)} (x).
\]
\end{remark}

\noindent
We can determine the relation between the Daehee numbers of the first kind, Euler's polynomials  and the Changhee polynomials of the first kind with higher order as follows.

\noindent
From Eq. (\ref{eq:2.2}), we get
\begin{equation} \label{eq:2.24}
s_1(n,\ell)=\binom{n}{\ell} D_{n-k}^{(\ell)},
\end{equation}

\noindent
substituting from Eq. (\ref{eq:2.24}) into Eq. (\ref{eq:2.9}), we obtain
\begin{equation} \label{eq:2.25}
Ch_n^{(k)}(x)= \sum_{\ell=0}^n \binom{n}{\ell} D_{n-\ell}^{(\ell)} E_{\ell}^{(k)}(x).
\end{equation}

\noindent
Also, from Kim et al. (2014), Theorem 3, we have
\begin{equation} \label{eq:2.26}
D_{n-k}^{(k)}=\sum_{m=0}^{n-k} s_1(n-k,m) B_m^{(k)}.
\end{equation}

\noindent
Substituting from Eq. (\ref{eq:2.26}) into Eq. (\ref{eq:2.25}), we get
\begin{equation} \label{eq:2.27}
Ch_n^{(k)}(x)= \sum_{\ell=0}^n  \sum_{m=0}^{n-\ell} \binom{n}{\ell} s_1(n-\ell,m) B_m^{(\ell)} E_{\ell}^{(k)}(x).
\end{equation}

\section{Higher-order Changhee Polynomials of The Second Kind}
In this section, we derive an explicit formulas and recurrence relations for the higher-order Changee numbers and polynomials of the second kind. Also the relation between these numbers and N\"{o}rlund numbers are given. Furthermore, we introduce the matrix representation of some results for higher-order Changhee numbers and polynomial obtained by Kim et al. \cite{Kimetal2014C} in terms of Stirling numbers, N\"{o}rlund numbers, Euler and Bernoulli numbers of higher-order and investigate a simple and short proofs of these results.\\

\noindent
Kim et al. \cite[Eq. 2.8]{Kimetal2014C}, defined the Changhee numbers of the second Kind with the second kind with order $k(\in  \mathbb{N})$ as follows:
\begin{equation} \label{eq:3.1}
\sum_{n=0}^\infty \widehat{Ch}_n^{(k)} \frac{t^n}{n!} =\left(\frac{2}{2+t} \right)^k (1+t)^k.
\end{equation}

\noindent
Kim et al. \cite[Eq. 2.24]{Kimetal2014C} proved that, for $n\geq 0$, the Chaghee numbers of the second kind satisfy the following relation:
\begin{equation} \label{eq:3.2}
\widehat{Ch}_n^{(k) }=\sum_{\ell}^n (-1)^{\ell} s_1(n, \ell) E_{\ell}^{(k)} .
\end{equation}

\noindent
We can write Eq. (\ref{eq:3.2}) in the matrix form as follows
\begin{equation} \label{eq:3.3}
\widehat{{\textbf {Ch}} }^{(k) }= {\bf S}_1 {\bf D} {\bf E}^{(k)},
\end{equation}

\noindent
where $\widehat{{\bf Ch}}^{(k) }$  is $(n+1)\times(k+1)$  matrix of Changhee numbers of the second kind with order $k$ and ${\bf \bf{D}}$ is $(n+1)\times(n+1)$ diagonal matrix with element $D_{ii}=(-1)^i, \; i=0, 1, \cdots,n$,
{\scriptsize
\[
\widehat{{\textbf {Ch}} }^{(k) }=
\left(
\begin{array}{ccccc}
\widehat{Ch}_0^{(0)} & \widehat{Ch}_0^{(1)} & \widehat{Ch}_0^{(2)} & \cdots & \widehat{Ch}_0^{(k)} \\
\widehat{Ch}_1^{(0)} & \widehat{Ch}_1^{(1)} & \widehat{Ch}_1^{(2)} & \cdots & \widehat{Ch}_1^{(k)} \\
\vdots    & \vdots    & \vdots    & \ddots & \vdots    \\
\widehat{Ch}_n^{(0) }& \widehat{Ch}_n^{(1) }& \widehat{Ch}_n^{(2) } & \cdots & \widehat{Ch}_n^{(k) }
\end{array}
\right).
\]
}

\noindent
For example, if setting  $0\leq n \leq 3, \; 0 \leq k \leq n$ , in (\ref{eq:3.3}), we have
{\scriptsize
\[
\left(
\begin{array}{rrrr}
0	& 1	 & 1	& 1\\
0	& \frac{1}{2} &	1	& \frac{3}{2}\\
0	& -\frac{1}{2} & 	-\frac{1}{2} &	0\\
0	& \frac{3}{4} &	0	& -\frac{3}{2}
\end{array}
\right)=
\left(
\begin{array}{rrrr}
1	& 0	 & 0	& 0\\
0	& 1	 & 0	& 0\\
0	& -1 & 1	& 0\\
0	& 2	& -3	& 1
\end{array}
\right)
\left(
\begin{array}{rrrr}
1	& 0	 & 0	& 0\\
0	& -1 & 0	& 0\\
0	& 0	 & 1	& 0\\
0	& 0	 & 0	& -1
\end{array}
\right)
\left(
\begin{array}{rrrr}
0	& 1	& 1	& 1\\
0	& -\frac{1}{2}	& -1	& -\frac{3}{2} \\
0	& 0	&  \frac{1}{2}	& \frac{3}{2} \\
0	& \frac{1}{4}	& \frac{1}{2}	& 0
\end{array}
\right).
\]
}

\noindent
Kim et al. \cite[ Eq. 2.25]{Kimetal2014C} proved that, for $n\geq 0$, the Chaghee numbers of the second kind satisfies the following relation
\begin{equation} \label{eq:3.4}
\sum_{n=0}^m
\widehat{Ch}_n^{(k)} {t^n \over n!}=\left({2\over {2+t}} \right)^k (1+t)^k.
\end{equation}

\begin{theorem}
For $n, k \in \mathbb{N}$, the Changhee numbers of the second kind satisfy the following relation
\begin{equation} \label{eq:3.5}
\widehat{Ch}_n^{(k)}= n! \sum_{i=0}^n { (-1)^i \over {2^i}}  \binom{k+i-1}{i}  \binom{k}{n-i}. 
\end{equation}
\end{theorem}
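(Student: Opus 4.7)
The plan is to mimic the proof of the analogous Theorem for the first kind (equations (\ref{eq:2.17})--(\ref{eq:2.19})), differing only in that the factor $(1+t)^x$ is replaced by $(1+t)^k$, which produces a \emph{finite} binomial series rather than a generalized one. Starting from the generating function (\ref{eq:3.1}), I would rewrite the first factor as
\[
\left(\frac{2}{2+t}\right)^k = \left(1+\frac{t}{2}\right)^{-k}
\]
and apply the generalized binomial theorem to obtain
\[
\left(1+\frac{t}{2}\right)^{-k} = \sum_{i=0}^{\infty}(-1)^{i}\binom{k+i-1}{i}\frac{t^{i}}{2^{i}}.
\]
For the second factor I would use the ordinary binomial theorem $(1+t)^{k}=\sum_{j=0}^{k}\binom{k}{j}t^{j}$.

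Next I would multiply the two series and collect powers of $t$ by setting $n=i+j$. Although $j$ only ranges from $0$ to $k$, the binomial coefficient $\binom{k}{n-i}$ vanishes for $n-i<0$ or $n-i>k$, so the inner sum can be extended to $0\le i\le n$ without changing the value. This yields
\[
\sum_{n=0}^{\infty}\widehat{Ch}_{n}^{(k)}\frac{t^{n}}{n!}
= \sum_{n=0}^{\infty}\left(\,\sum_{i=0}^{n}(-1)^{i}\,\frac{1}{2^{i}}\binom{k+i-1}{i}\binom{k}{n-i}\right) t^{n}.
\]
Finally, equating coefficients of $t^{n}/n!$ on both sides delivers the claimed identity
\[
\widehat{Ch}_{n}^{(k)} = n!\sum_{i=0}^{n}\frac{(-1)^{i}}{2^{i}}\binom{k+i-1}{i}\binom{k}{n-i}.
\]

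There is really no serious obstacle: the only point requiring a touch of care is the index manipulation when merging the two series, namely verifying that the natural range $0\le i\le n-k$ coming from the constraint $j=n-i\le k$ can be relaxed to $0\le i\le n$ by virtue of $\binom{k}{n-i}$ vanishing outside that range. This mirrors exactly the mechanism used in the proof of Theorem for $Ch_n^{(k)}(x)$, so the argument is essentially a transcription with $x$ replaced by $k$.
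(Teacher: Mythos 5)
Your proposal is correct and is essentially identical to the paper's own proof: the paper likewise expands $\left(1+\frac{t}{2}\right)^{-k}$ by the generalized binomial theorem and $(1+t)^k$ as a binomial series (written there as an infinite sum whose terms beyond $j=k$ vanish), multiplies the series, reindexes with $n=i+j$, and equates coefficients of $t^n/n!$. One trivial slip in your closing commentary: the constraint $j=n-i\le k$ forces $i\ge n-k$, so the natural range is $\max(0,\,n-k)\le i\le n$ rather than $0\le i\le n-k$, but since your displayed derivation already uses the full sum $0\le i\le n$ with $\binom{k}{n-i}$ vanishing outside the admissible range, this does not affect the argument.
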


\begin{proof}
From Eq.(\ref{eq:3.4}), then
\begin{eqnarray*}
	\sum_{n=0}^\infty  \widehat{Ch}_n^{(k)} {t^n \over n!} & = &
	\left(1+ {t\over 2} \right)^{-k} (1+t)^k
	\nonumber\\
	& = &
	\sum_{i=0}^\infty (-1)^i \binom{k+i-1}{i} \left( {t \over 2}\right)^i \sum_{j=0}^\infty \binom{k}{j} t^j 
	\nonumber\\
	&= &
	\sum_{i=0}^\infty \sum_{j=0}^\infty 
	{ (-1)^i \over {2^i}} \binom{k+i-1}{i}  \binom{k}{j}  t ^{i+j} 
	\nonumber\\
	& = &
	\sum_{i=0}^\infty \sum_{\ell=i}^\infty 
	{ (-1)^i \over {2^i}} \binom{k+i-1}{i}  \binom{k}{\ell-i}  t^\ell 
	\nonumber\\
	& = &
	\sum_{\ell=0}^\infty \sum_{i=0}^\ell 
	{ (-1)^i \over {2^i}} \ell! \binom{k+i-1}{i}  \binom{k}{\ell-i}  {t^\ell \over \ell!}, 
\end{eqnarray*}

\noindent
by equating the coefficients of $t^\ell$ on both sides, we have Eq. (\ref{eq:3.5}). This completes the proof.
\end{proof}

\noindent
Kim et al.  \cite[Theorem 2.7]{Kimetal2014C} introduced the following result. For  $n\geq 0$,
\begin{equation} \label{eq:3.6}
E_m^{(k)}(k)=\sum_{n=0}^m \widehat{Ch}_n^{(k)} S_2(m,n),
\end{equation}

\noindent
we can write (\ref{eq:3.6}) in the matrix form as follows
\begin{equation}\label{eq:3.7}
{\bf E}^{(k)}(k)= {\bf S}_2 \, \widehat{\bf Ch}^{(k) },
\end{equation}

\noindent
where ${\bf E}^{(k) } (k)$  is $(n+1)\times(k+1)$  matrix of Euler polynomial when $x=k$.

\noindent
For example, if setting $0 \leq n \leq 3,\; 0 \leq k \leq n$ in (\ref{eq:3.7}), we have
{\scriptsize
\[
\left(
\begin{array}{cccc}
0	& 1	& 1	& 1\\
0	& \frac{1}{2}	& 1	& \frac{3}{2} \\
0	& 0	 & \frac{1}{2} &	\frac{3}{2} \\
0	& -\frac{1}{4}  & 	-\frac{1}{2} & 	0
\end{array}
\right)=
\left(
\begin{array}{cccc}
1 & 0 & 0 & 0 \\
0 & 1 & 0 & 0 \\
0 & 1 & 1 & 0 \\
0 & 2 & 3 & 1
\end{array}
\right)\left(
\begin{array}{cccc}
0	& 1	& 1	& 1\\
0	& 1/2 &	1	& 3/2\\
0	& -1/2 &	-1/2 &	0\\
0	& 3/4	& 0	& -3/2
\end{array}
\right).
\]
}

\noindent
Kim et al.  \cite{Kimetal2014C} defined the Changhee polynomials of the second kind with order $k(\in \mathbb{N})$ as follows:
\begin{equation} \label{eq:3.8}
\sum_{n=0}^\infty \widehat{Ch}_n^{(k)} (x) = \left( \frac{2}{2+t} \right)^k (1+t)^{x+k}.
\end{equation}

\noindent
Kim et al.  \cite[Eq. (2.34)]{Kimetal2014C} introduced the following result.
\begin{equation} \label{eq:3.9}
\widehat{Ch}_n^{(k) } (x)=\sum_{\ell}^n (-1)^{\ell} s_1(n, \ell) E_{\ell}^{(k)} (-x) .
\end{equation}

\noindent
We can write Eq. (\ref{eq:3.9}) in the matrix form as follows
\begin{equation} \label{eq:3.10}
\widehat{{\textbf {Ch}} }^{(k) } (x)= {\bf S}_1 {\bf D} {\bf E}^{(k)} (-x),
\end{equation}

\noindent
where $\widehat{{\bf Ch}}^{(k) } (x)$  is $(n+1)\times(k+1)$  matrix of Changhee polynomials of the second kind with order $k$.

\noindent
For example, if setting  $0\leq n \leq 3, \; 0 \leq k \leq n$ , in (\ref{eq:3.10}), we have 
{\scriptsize
\begin{eqnarray*}
	\left(
	\begin{array}{cccc}
		0	& 1	& 1	& 1	\\
		0	& x+\frac{1}{2}	& x+1	& x+\frac{3}{2}	\\
		0	& x^2-\frac{1}{2}	& x^2+x-\frac{1}{2}	& x^2+2x\\
		0	& x^3-\frac{3}{2}x^2-x+\frac{3}{4}	& x^3-\frac{5}{2}x	& x^3+\frac{3}{2}x^2-\frac{5}{2}x-\frac{3}{2}\\
	\end{array}
	\right)=
\left(
\begin{array}{cccc}
1	& 0	 & 0	& 0\\
0	& 1	 & 0	& 0\\
0	& -1 & 1	& 0\\
0	& 2	& -3	& 1
\end{array}
\right)
\left(
\begin{array}{cccc}
1	& 0	 & 0	& 0\\
0	& -1 & 0	& 0\\
0	& 0	 & 1	& 0\\
0	& 0	 & 0	& -1
\end{array}
\right)
\times
\\
\left(
\begin{array}{cccc}
0	& 1	& 1	& 1\\
0	& -x-\frac{1}{2} & 	-x-1	& -x-\frac{3}{2} \\
0	& x^2+x	& x^2+2x+\frac{1}{2} &	x^2+3x+\frac{3}{2} \\
0	& -\frac{3}{2}x^2-x^3+\frac{1}{4} &	-3x^2-\frac{3}{2}x-x^3+\frac{1}{2} &	-x^3-\frac{9}{2}x^2-\frac{9}{2}x
\end{array}
\right).
\end{eqnarray*}
}

\noindent
Kim et al.  \cite[Theorem 2.9]{Kimetal2014C} introduced the following result. For  $n\geq 0$,
\begin{equation} \label{eq:3.11}
E_m^{(k)}(x+k)=\sum_{n=0}^m \widehat{Ch}_n^{(k)} (x) S_2(m,n).
\end{equation}

\noindent
We can write (\ref{eq:3.11}) in the matrix form as follows
\begin{equation}\label{eq:3.12}
{\bf E}^{(k)}(x+k)= {\bf S}_2 \, \widehat{\bf Ch}^{(k) }(x).
\end{equation}

\noindent
For example, if setting $0 \leq n \leq 3,\; 0 \leq k \leq n$ in (\ref{eq:3.12}), we have
{\scriptsize
\begin{eqnarray*}
	\left(
	\begin{array}{cccc}
		0	& 1	& 1	& 1\\
		0	& x+\frac{1}{2} &	x+1	 & x+\frac{3}{2} \\
		0	& x^2+x	& x^2+2x+\frac{1}{2} &	x^2+3x+\frac{3}{2} \\
		0	& x^3+\frac{3}{2}x^2-\frac{1}{4} &	x^3+\frac{3}{2}x^2+\frac{3}{2}x-\frac{1}{2} &	x^3+\frac{9}{2}x^2+\frac{9}{2}x
	\end{array}
	\right)
	=
\left(
\begin{array}{cccc}
1 & 0 & 0 & 0 \\
0 & 1 & 0 & 0 \\
0 & 1 & 1 & 0 \\
0 & 2 & 3 & 1
\end{array}
\right)
\times\\
\left(
\begin{array}{cccc}
0	& 1	& 1	& 1\\
0	& x+\frac{1}{2}	& x+1	& x+\frac{3}{2} \\
0	& x^2-\frac{1}{2} &	x^2+x-\frac{1}{2} &	x^2+2x\\
0	& x^3-\frac{3}{2}x^2-x+\frac{3}{4} &	x^3-\frac{5}{2}x	& x^3+\frac{3}{2}x^2-\frac{5}{2}x-\frac{3}{2}
\end{array}
\right).
\end{eqnarray*}
}

\noindent
Kim et al.  \cite[Theorem 2.10]{Kimetal2014C} introduced the following result.
For  $n\in \mathbb{Z},\; k\in \mathbb{N}$,
\begin{equation} \label{eq:3.13}
\frac{(-1)^n}{n!} \widehat{Ch}_n^{(k)} (x) = \sum_{m=1}^n \frac{\binom{n-1}{n-m}}{m!}  Ch_m^{(k)}(-x).
\end{equation}

\noindent
We can write Eq. (\ref{eq:3.13}) in the matrix form as follows
\begin{equation} \label{eq:3.14}
{\bf D_n} \widehat{{\bf Ch}}^{(k)} (x)=   {\bf C} \; {{\bf Ch}}^{(k)}(-x),
\end{equation}

\noindent
where ${(\bf D_n)}^{-1}$ is $(n+1)\times(n+1)$ diagonal matrix with elements ${\bf D_n}_{ii}=(-1)^i/i!$, 
${\bf C}$ is $(n+1)\times(n+1)$ matrix with elements ${\bf C}_{ij}=\binom{i-1}{i-j}/j!$.

\noindent
For example, if setting  $0\leq n \leq 3,\; 0 \leq k \leq n$ in (\ref{eq:3.14}), we have
{\scriptsize
\begin{eqnarray*}
\left(
\begin{array}{cccc}
	0	& 0	& 0	& 0\\
	0	& x+\frac{1}{2}	& x+1	& x+\frac{3}{2}\\
	0	& x^2-\frac{1}{2}	& x^2+x-\frac{1}{2}	& x^2+2x\\
	0	& x^3-\frac{3}{2}x^2-x+\frac{3}{4}	& x^3-\frac{5}{2}x	& x^3+\frac{3}{2}x^2-\frac{5}{2}x-\frac{3}{2}
\end{array}
\right)
=
\left(
\begin{array}{cccc}
1	&  0  & 0	        & 0\\
0	& -1  & 0	        & 0\\
0	&  0  &  \frac{1}{2}& 0\\
0	& 0	  & 0	        & -\frac{1}{6}
\end{array}
\right)^{-1}
\left(
\begin{array}{cccc}
0	& 0	& 0	& 0\\
0	& 1	& 0	& 0\\
0	& 1	& \frac{1}{2}	& 0\\
0	& 1	& 1	& \frac{1}{6}
\end{array}
\right)
\times
\\
\left(
\begin{array}{cccc}
0	& 1	& 1	& 1\\
0	& -x-\frac{1}{2}	& -x-1	& -x-\frac{3}{2}\\
0	& x^2+2x+\frac{1}{2}	& x^2+3x+\frac{3}{2}	& x^2+4x+3\\
0	& -x^3-\frac{9}{2}x^2-5x-\frac{3}{4}	& -x^3-6x^2-\frac{19}{2}x-3	& -x^3-1\frac{5}{2}x^2-\frac{31}{2}x-\frac{15}{2}
\end{array}
\right).
\end{eqnarray*}
}

\noindent
Kim et al.  \cite[Theorean 2.11]{Kimetal2014C} introduced the following result. For  $n\in \mathbb{Z},\; k\in \mathbb{N}$,
\begin{equation} \label{eq:3.15}
(-1)^n \frac{Ch_n^{(k)}(x)}{n!}= \sum_{m=1}^n \frac{\binom{n1-}{n-m}}{m!}  \widehat{Ch}_m^{(k)}(-x).
\end{equation}

\noindent
We can write Eq. (\ref{eq:3.15}) in the matrix form as follows
\begin{equation} \label{eq:3.16}
{\bf Ch}^{(k)} (x)=  {(\bf D_n)}^{-1} {\bf C}\; {\widehat{\bf Ch}}^{(k)}(-x).
\end{equation}

\noindent
For example, if setting  $0\leq n \leq 3,\; 0 \leq k \leq n$ in (\ref{eq:3.16}), we have
{\scriptsize
\begin{eqnarray*}
	\left(
	\begin{array}{cccc}
		0	& 0	& 0	& 0 \\
		0	& x-\frac{1}{2} &	x-1	& x-\frac{3}{2} \\
		0	& x^2-2x+\frac{1}{2} &	x^2-3x+\frac{3}{2} &	x^2-4x+3\\
		0	& x^3-\frac{9}{2}x^2+5x-\frac{3}{4} &	x^3-6x^2+\frac{19}{2}x-3 &	x^3-\frac{15}{2}x^2+\frac{3}{2}1x-\frac{15}{2}
	\end{array}
	\right)=
\left(
\begin{array}{cccc}
1	&  0  & 0	        & 0\\
0	& -1  & 0	        & 0\\
0	&  0  &  \frac{1}{2}& 0\\
0	& 0	  & 0	        & -\frac{1}{6}
\end{array}
\right)^{-1} \times
\\
\left(
\begin{array}{cccc}
0	& 0	& 0	& 0\\
0	& 1	& 0	& 0\\
0	& 1	& \frac{1}{2}	& 0\\
0	& 1	& 1	& \frac{1}{6}
\end{array}
\right) 
\left(
\begin{array}{cccc}
0	& 1	&  1	& 1\\
0	&-x+\frac{1}{2} &	-x+1 &	-x+\frac{3}{2} \\
0	& x^2-\frac{1}{2} &	x^2-x-\frac{1}{2} &	x^2-2x \\
0	& -x^3-\frac{3}{2}x^2+x+\frac{3}{4} &	-x^3+\frac{5}{2}x	& -x^3+\frac{3}{2}x^2+\frac{5}{2}x-\frac{3}{2}
\end{array}
\right).
\end{eqnarray*}
}

\noindent
Kim et al.  \cite[Eq. (2.32)]{Kimetal2014C} introduced the following result
\begin{equation} \label{eq:3.17}
\sum_{n=0}^\infty \widehat{Ch}_n^{(k)} (x) {t^n \over n!} =(1+t)^{x+k} \left( {2 \over 2+t}\right)^k.
\end{equation}
We can derive the relation between the first and the second kind Changhee polynomials with higher order as follows. 

\begin{theorem}
For $n,k \in \mathbb{N}$, then
\begin{equation}\label{eq:3.18}
\widehat{Ch}_n^{(k)} (x)=
\sum_{n=0}^j \binom{j}{n}   {k! \over (k+n-j)!} Ch_n^{(k)} (x).
\end{equation}
\end{theorem}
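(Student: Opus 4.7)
The plan is to prove the identity by comparing the generating functions for the first- and second-kind higher-order Changhee polynomials. Equation~(\ref{eq:3.17}) gives
\[
\sum_{n=0}^\infty \widehat{Ch}_n^{(k)}(x)\,\frac{t^n}{n!} = \left(\frac{2}{2+t}\right)^{\!k}(1+t)^{x+k},
\]
while equation~(\ref{eq:2.16}) gives the generating function of $Ch_n^{(k)}(x)$ with $(1+t)^x$ in place of $(1+t)^{x+k}$. The whole point is therefore that the two series differ by a factor of $(1+t)^k$, which I can expand with the binomial theorem as a polynomial in $t$.

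First I would write
\[
\sum_{n=0}^\infty \widehat{Ch}_n^{(k)}(x)\,\frac{t^n}{n!}
=(1+t)^k\sum_{m=0}^\infty Ch_m^{(k)}(x)\,\frac{t^m}{m!}
=\sum_{j=0}^k \binom{k}{j}t^j\sum_{m=0}^\infty Ch_m^{(k)}(x)\,\frac{t^m}{m!},
\]
form the Cauchy product, and collect the coefficient of $t^n$. After writing $n=j+m$ and pulling $n!$ through, one obtains
\[
\widehat{Ch}_n^{(k)}(x)=\sum_{m=0}^{n}\binom{k}{n-m}\frac{n!}{m!}\,Ch_m^{(k)}(x),
\]
the convention $\binom{k}{n-m}=0$ for $n-m>k$ taking care of the range. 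Finally, rewriting $\binom{k}{n-m}=\dfrac{k!}{(n-m)!\,(k+m-n)!}$ and combining $\dfrac{n!}{m!(n-m)!}=\binom{n}{m}$ yields exactly the claimed form (\ref{eq:3.18}), with the outer index $n$ playing the role of the $j$ in the statement and the summation index matching the statement's inner $n$.

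The step that needs the most care is the bookkeeping in the Cauchy product, specifically the reindexing that converts $\binom{k}{n-m}$ into $k!/(k+m-n)!$; this must be tracked carefully because the summation variable and the outer index in the statement as printed have the same letter. Beyond that, the proof is essentially a generating-function identity with no analytic subtleties, since $(1+t)^k$ is a finite polynomial and no convergence issue arises.
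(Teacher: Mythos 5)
Your proof is correct and follows essentially the same route as the paper's own: both rewrite the generating function \eqref{eq:3.17} as $(1+t)^x\left(\frac{2}{2+t}\right)^k(1+t)^k$, expand $(1+t)^k$ by the binomial theorem, form the Cauchy product, and equate coefficients, closing with the same rewriting $\binom{k}{j-n}\frac{j!}{n!}=\binom{j}{n}\frac{k!}{(k+n-j)!}$. You also correctly flag the index clash in the printed statement (the left-hand side should carry the index $j$, not $n$), which the paper's derivation, ending in a sum over $t^j$, implicitly confirms.
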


\begin{proof}
From Eq. (\ref{eq:3.17}), 
\begin{eqnarray} \label{eq:3.19}
\sum_{n=0}^\infty \widehat{Ch}_n^{(k)} (x) {t^n \over n!} & = &
(1+t)^x \left( {2 \over 2+t}\right)^k (1+t)^k
\nonumber\\
& = &
\sum_{n=0}^\infty Ch_n^{(k)} (x) {t^n \over n!} (1+t)^k
\nonumber\\
& = &
\sum_{n=0}^\infty Ch_n^{(k)} (x) {t^n \over n!} \sum_{i=0}^\infty \binom{k}{i} t^i
\nonumber\\
& = &
\sum_{n=0}^\infty \sum_{i=0}^\infty \binom{k}{i} Ch_n^{(k)} (x)  {t^{n+i} \over n!}   
\nonumber\\
& = &
\sum_{n=0}^\infty \sum_{i=0}^\infty \binom{k}{i} Ch_n^{(k)} (x)  {(n+i)! \over n!}{t^{n+i} \over (n+i)!}   
\nonumber\\
& = &
\sum_{n=0}^\infty \sum_{j=n}^\infty \binom{k}{j-n} Ch_n^{(k)} (x)  {j! \over n!}{t^j \over j!}   
\nonumber\\
& = &
\sum_{j=0}^\infty \sum_{n=0}^j \binom{k}{j-n} Ch_n^{(k)} (x)  {j! \over n!}{t^j \over j!},
\end{eqnarray}

\noindent
by equating the coefficients of $t^j$ on both sides, we get Eq. (\ref{eq:3.18}), this completes the proof.
\end{proof}

\noindent
Setting $x=0$, in Eq. (\ref{eq:3.19}), we get the relation between the first and the second kinds of the Changhee numbers with higher orders, 
\begin{equation}\label{eq:3.20}
\widehat{Ch}_m^{(k)} =
\sum_{n=0}^m \binom{m}{n}   {k! \over (k+n-m)!}  Ch_n^{(k)}.
\end{equation}

\noindent
We obtain the relation between the first and the second kinds of the Changhee numbers by setting $k=1$, in Eq. (\ref{eq:3.20}), we get
\begin{equation}\label{eq:3.21}
\widehat{Ch}_m =
\sum_{n=0}^m \binom{m}{n}   {1 \over (n+1-m)!}  Ch_n.
\end{equation}

\noindent
Kim et al. \cite[Eq. 2.16]{Kimetal2014C} derived the following relation,
\begin{equation} \label{eq:3.22}
\sum_{n=0}^\infty Ch_n^{(k)} (x) { t^n \over n!} =\left({2 \over {2+t} } \right)^k (1+t)^x.
\end{equation}

\begin{theorem}
Forn $m,k \in \mathbb{N}$, we have
\begin{equation} \label{eq:3.23}
Ch_m^{(k)} (x)=\sum_{n=0}^m (-1)^{m-n} \binom{m}{n} {(k+m-n-1)! \over (k-1)!} \widehat{Ch}_n^{(k)} (x).
\end{equation}
\end{theorem}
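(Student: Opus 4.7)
The plan is to mirror the technique of the previous theorem but in the opposite direction, namely, to invert the factor $(1+t)^k$ that converts the generating function of $Ch_n^{(k)}(x)$ into that of $\widehat{Ch}_n^{(k)}(x)$. From (\ref{eq:3.17}) and (\ref{eq:3.22}) one sees at once that
\[
\sum_{m=0}^\infty Ch_m^{(k)}(x)\,\frac{t^m}{m!}
= (1+t)^{-k}\sum_{n=0}^\infty \widehat{Ch}_n^{(k)}(x)\,\frac{t^n}{n!},
\]
so the whole proof reduces to expanding $(1+t)^{-k}$ as a Newton series and matching coefficients of $t^m$.

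First I would apply the negative binomial expansion
\[
(1+t)^{-k}=\sum_{i=0}^\infty (-1)^i\binom{k+i-1}{i} t^i
=\sum_{i=0}^\infty \frac{(-1)^i(k+i-1)!}{i!\,(k-1)!}\,t^i,
\]
multiply it by the generating function of $\widehat{Ch}_n^{(k)}(x)$, and obtain a double series in $i$ and $n$ of the form $\sum_{i,n}(-1)^i\frac{(k+i-1)!}{i!(k-1)!}\widehat{Ch}_n^{(k)}(x)\frac{t^{n+i}}{n!}$. Then I would substitute $m=n+i$ (so $i=m-n$, with $0\le n\le m$) to rewrite this as a single sum in $m$, insert $\frac{m!}{m!}$ to turn $\frac{1}{n!}$ into $\frac{\binom{m}{n}}{m!}$, and read off the coefficient of $t^m/m!$.

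That comparison immediately yields
\[
Ch_m^{(k)}(x)=\sum_{n=0}^m(-1)^{m-n}\binom{m}{n}\frac{(k+m-n-1)!}{(k-1)!}\,\widehat{Ch}_n^{(k)}(x),
\]
which is the claimed identity. There is no serious obstacle here: the only point requiring care is the index shift $i=m-n$ together with the bookkeeping needed to replace $\frac{t^{n+i}}{n!}$ by $\frac{t^m}{m!}$, exactly the manipulation carried out in the proof of (\ref{eq:3.18}); conceptually the present statement is nothing but the generating-function inverse of that previous theorem, since multiplication by $(1+t)^k$ and by $(1+t)^{-k}$ are inverse operations on formal power series.
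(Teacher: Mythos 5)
Your proposal is correct and follows essentially the same route as the paper's own proof: both write the generating function of $Ch_n^{(k)}(x)$ as $(1+t)^{-k}$ times that of $\widehat{Ch}_n^{(k)}(x)$, expand $(1+t)^{-k}$ by the negative binomial theorem, reindex with $m=n+i$, and equate coefficients of $t^m/m!$, using $\frac{m!}{n!}\binom{k+m-n-1}{m-n}=\binom{m}{n}\frac{(k+m-n-1)!}{(k-1)!}$. No gap to report.
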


\begin{proof}
From Eq. (\ref{eq:3.22}) we have
\begin{eqnarray} \label{eq:3.24}
\sum_{n=0}^\infty Ch_n^{(k)} (x) { t^n \over n!} & = &
\left({2 \over {2+t} } \right)^k (1+t)^{x+k} (1+t)^{-k}
\nonumber\\
&= &
\sum_{n=0}^\infty \widehat{Ch}_n^{(k)} (x) {t^n \over n!} \sum_{i=0}^\infty (-1)^i \binom{k+i-1}{i} t^i
\nonumber\\
&= &
\sum_{n=0}^\infty \sum_{i=0}^\infty (-1)^i \binom{k+i-1}{i} \widehat{Ch}_n^{(k)} (x) {t^{n+i} \over n!} 
\nonumber\\
&= &
\sum_{n=0}^\infty \sum_{m=n}^\infty (-1)^{m-n} \binom{k+m-n-1}{m-n} \widehat{Ch}_n^{(k)} (x) {t^m \over n!} 
\nonumber\\
&= &
\sum_{n=0}^\infty \sum_{m=n}^\infty (-1)^{m-n} {m! \over n!} \binom{k+m-n-1}{m-n} \widehat{Ch}_n^{(k)} (x) {t^m \over m!} 
\nonumber\\
&= &
\sum_{m=0}^\infty \sum_{n=0}^m (-1)^{m-n} {m! \over n!} \binom{k+m-n-1}{m-n} \widehat{Ch}_n^{(k)} (x) {t^m \over m!},
\end{eqnarray}
by equating the coefficients of $t^m$, on both sides, we get Eq. (\ref{eq:3.23}), this completes the proof.
\end{proof}

\noindent
Hence if $x=0$, then
\begin{equation} \label{eq:3.25}
Ch_m^{(k)} =\sum_{n=0}^m (-1)^{m-n} \binom{m}{n} {(k+m-n-1)! \over (k-1)!} \widehat{Ch}_n^{(k)} .
\end{equation}

\noindent
Moreover, if $k=1$,
\begin{equation} \label{eq:3.26}
Ch_m =\sum_{n=0}^m (-1)^{m-n} {m! \over n!} \widehat{Ch}_n.
\end{equation}

\noindent
We can find the relation between Changhee polynomials of the second order with Daehee and Euler's polynomials.

\noindent
From Eq. (\ref{eq:2.2}) and Eq. (\ref{eq:3.9}), we have
\begin{equation} \label{eq:3.27}
\widehat{Ch}_n^{(k)}(x)=\sum_{\ell=0}^n (-1)^\ell  \binom{n}{\ell} D_{n-\ell}^{(\ell)} E_\ell^{(k)} (-x).
\end{equation}

\noindent
But
\[
D_n^{(k)}=\sum_{\ell=0}^n s_1(n,\ell) B_\ell^{(k)},
\]

\noindent
therefore
\begin{equation} \label{eq:3.28}
\widehat{Ch}_n^{(k)}(x)=\sum_{\ell=0}^n \sum_{m=0}^{n-\ell} (-1)^\ell \binom{n}{\ell} s_1(n-\ell,m) B_m^{(\ell)} E_\ell^{(k)} (-x).
\end{equation}



\end{document}